\documentclass[preprint,3p,times]{elsarticle}

\UseRawInputEncoding
\usepackage{amssymb}
\usepackage{amscd,amssymb,latexsym,srcltx}
\usepackage{dsfont}
\usepackage{amsthm}
\usepackage{setspace}
\usepackage{mdwlist}
\usepackage{enumerate}
\usepackage{graphics}
\usepackage{graphicx}
\usepackage{epsfig}
\usepackage{subfigure}
\usepackage{amssymb}
\usepackage{mathrsfs}
\usepackage{amsmath}
\allowdisplaybreaks
\usepackage{amsfonts}
\usepackage{geometry}
 \usepackage{hyperref}
\usepackage{geometry}
\usepackage{amsfonts}
\usepackage{mathrsfs}
\usepackage{amsmath}
\usepackage{amssymb}
\usepackage{amsthm}
\usepackage{geometry}
\geometry{left=1.5cm,right=1.5cm,top=1.5cm,bottom=1.5cm}
\newtheorem{assumption}{Assumption}[section]

\newtheorem{thm}{Theorem}[section]

\newtheorem{Def}[thm]{Definition}
\newtheorem{lemma}[thm]{Lemma}

\numberwithin{equation}{section}

\newcounter{cnum}
\setcounter{cnum}{1}

\makeatletter
\def\C{\@ifnextchar[{\@with}{\@without}}
\def\@with[#1]{\@ifundefined{c@#1}{%
    	   \newcounter{#1}%
            \@bsphack\protected@write\@auxout{}{
            \string\newlabel{#1}{{\thecnum} {\thepage}}
            \@esphack
            }
            C_{\thecnum}
            \stepcounter{cnum}%
        }{C_{\ref{#1}}}}
\def\@without{C_{\thecnum}
        \stepcounter{cnum}}
\makeatother
\setlength\arraycolsep{1pt}
\journal{}

\begin{document}

\begin{frontmatter}


\title{Existence of a generalized polynomial attractor for the wave equation with nonlocal weak damping, anti-damping and critical nonlinearity}

\author[1,2]{Chunyan Zhao}

\author[2]{Chengkui Zhong\corref{cor1}}
\author[2]{Senlin Yan}

\address[1]{School of Mathematics and Big Data, Anhui University of Science and Technology, Huainan, 232001, PR China}
\address[2]{Department of Mathematics, Nanjing University, Nanjing, 210093, PR China}

 \cortext[cor1]{Corresponding author.\\
 E-mails: emmanuelz@163.com(C. Zhao), ckzhong@nju.edu.cn(C. Zhong), dg20210019@smail.nju.edu.cn(S. Yan)}

\begin{abstract}
In this paper, we first establish a criterion based on contractive function for the existence of generalized polynomial attractors. This criterion  only involves some rather weak compactness associated with the repeated inferior limit  and requires no compactness,  which makes it  suitable for critical cases. Then by this abstract theorem, we verify the existence of a generalized polynomial attractor and estimate its attractive speed for the wave equation with nonlocal weak damping, anti-damping and critical nonlinearity.
\end{abstract}

\begin{keyword}

Polynomial attractor; Wave equation; Nonlocal weak damping; Critical nonlinearity
\MSC[2010] 35B40\sep 35B41\sep 35L05\sep 35B33
\end{keyword}

\end{frontmatter}


\section{Introduction}
The global attractor is considered as an appropriate concept to describe the long-time behavior of a dissipative infinite dimensional dynamical system. Even so, it still has some limitations. There is no
quantitative information on the speed at which it attracts  bounded sets in its definition. In fact, it may attract  bounded sets at arbitrarily low rates. In order to quantitatively describe the attractive velocity of attractors, we introduced the concept of the generalized~$\varphi$-attractor in \cite{my4}. A generalized~$\varphi$-attractor is a positively invariant, compact attracting set whose attractive speed is characterized by a non-negative decay function~$\varphi$. In particular, it is a generalized polynomial attractor when~$\varphi$ is a polynomial function. By estimating the decay rate of noncompactness measure, we verified the existence of a generalized polynomial attractor and estimated its specific attractive rate  for a semilinear wave equation with nonlocal weakly damping, anti-damping and subcritical nonlinearity
\begin{equation*}
  u_{tt}-\Delta u+k||u_{t}||^p u_t+f(u)
=\displaystyle\int_{\Omega}K(x,y)u_{t}(t,y)dy+h(x) \ \ \text{in}\ [0,\infty)\times\Omega.
\end{equation*}
However, a key ingredient in the method in \cite{my4} is the compactness condition.
When~$f$ is of critical growth,  the corresponding Sobolev embedding is no longer compact, and  thus we failed to establish the existence of a  generalized polynomial attractor for the critical case in \cite{my4}. The aim of this paper is to solve this problem. Chueshov\cite{Chueshov2015,Chueshov2008} proposed the method of contractive functions by which the existence of the global attractor can be established for the critical case. We are inspired to  establish a criterion based on contractive functions for the existence of generalized polynomial attractors. This criterion  only involves some rather weak compactness associated with the repeated inferior limit  and requires no compactness,  which makes it  suitable for critical cases. Then we apply this abstract theorem to the above wave equation with critical nonlinearity~$f$ and solve our problem.

The paper is organized as follows. Section 2 presents an abstract theorem on the existence of generalized polynomial attractors. In Section 3, we apply this theorem to the above wave equation in the critical case.

 Throughout this paper, the symbols~$\hookrightarrow$ and~$\hookrightarrow\hookrightarrow$ stand for continuous embedding and compact embedding respectively. The capital letter ``C" with a (possibly empty) set of subscripts will denote a positive constant depending only on its subscripts and may vary from one occurrence to another.

 \section{An abstract theorem on the existence of polynomial attractors}
\begin{Def}\citep[Definition 2.6]{my4}\label{def21-1-31-1}
Assume that~$\varphi:\mathds{R}^{+}\rightarrow\mathds{R}^{+}$ satisfies~$\varphi(t)\rightarrow 0$~as~$t\rightarrow +\infty$. We call a compact set ~$\mathcal{A}^*\subseteq X$ a generalized~$\varphi$-attractor for the dynamical system~$(X, \{S(t)\}_{t\geq0})$, iff~$\mathcal{A}^*$ is positively invariant with respect to~$S(t)$ and there exists~$t_{0}\in \mathds{R}$ such that for every bounded set~$B\subseteq X$ there exists~$t_{B}\geq0$ such that
\begin{equation*}
\begin{split}
\mathrm{ dist}\left(S(t)B, \mathcal{A}^*\right)\leq \varphi(t+t_0-t_{B}),\ \forall t\geq t_{B}.
  \end{split}
\end{equation*}
In particular, if~$\varphi(t)=Ct^{-\beta}$ for certain positive constants~$C,\beta$, then~$\mathcal{A}^*$ is called a generalized polynomial attractor.
\end{Def}
\begin{lemma}\citep[Theorem 2.2]{my4}\label{20-8-5-3}
Assume that the dissipative dynamical system~$(X, \{S(t)\}_{t\geq0})$ is~$\varphi$-decaying with respect to the noncompactness measure~$\alpha$, i.e., there exist a positively invariant bounded absorbing set~$\mathcal{B}_0$ and a positive constant~$t_{0}$ such that
\begin{equation*}
  \alpha(S(t)\mathcal{B}_0)\leq\varphi(t),\ \forall t\geq t_{0}.
\end{equation*}
Then~$(X, \{S(t)\}_{t\geq0})$ possesses a generalized~$\varphi$-attractor~$\mathcal{A}^*$ such that for every bounded set~$B\subseteq X$ we have
\begin{equation}\label{20-8-5-33}
\begin{split}
\mathrm{ dist}\left(S(t)B, \mathcal{A}^*\right)\leq \varphi(t-t_{*}(B)-1),\ \forall t\geq t_{*}(B)+t_{0}+1,
  \end{split}
\end{equation}
where~$t_{*}(B)$ is the entering time of~$B$~into~$\mathcal{B}_0$.
\end{lemma}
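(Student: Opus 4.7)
The plan is to build $\mathcal{A}^*$ as the $\omega$-limit set of the positively invariant absorbing set $\mathcal{B}_0$ and then to extract the explicit attraction rate from the quantitative decay $\alpha(S(t)\mathcal{B}_0)\le\varphi(t)$.

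First I would set $\mathcal{A}^* := \omega(\mathcal{B}_0) = \bigcap_{s\ge 0}\overline{\bigcup_{t\ge s}S(t)\mathcal{B}_0}$. Since $\alpha(S(t)\mathcal{B}_0)\to 0$ is a strong form of asymptotic compactness, a standard Ladyzhenskaya-type argument shows that $\mathcal{A}^*$ is nonempty and compact; continuity of each $S(t)$ together with positive invariance of $\mathcal{B}_0$ then gives $S(t)\mathcal{A}^*\subseteq\mathcal{A}^*$. At this stage one already knows $\mathrm{dist}(S(t)\mathcal{B}_0,\mathcal{A}^*)\to 0$, but without a rate.

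Next I would lift this qualitative attraction to the explicit estimate (\ref{20-8-5-33}) by splitting off the entering time. For a bounded set $B$, $S(\sigma)B\subseteq\mathcal{B}_0$ whenever $\sigma\ge t_*(B)$. Given $t\ge t_*(B)+t_0+1$, put $\sigma:=t_*(B)+1$ and $\tau:=t-\sigma\ge t_0$; the semigroup property then yields
\[
S(t)B \;=\; S(\tau)\,S(\sigma)B \;\subseteq\; S(\tau)\mathcal{B}_0,
\]
so that $\mathrm{dist}(S(t)B,\mathcal{A}^*)\le\mathrm{dist}(S(\tau)\mathcal{B}_0,\mathcal{A}^*)$, and it remains to bound the right-hand side by $\varphi(\tau)=\varphi(t-t_*(B)-1)$.

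The hard part, and the main obstacle, is to carry out this last bound with \emph{the same} function $\varphi$. The plan is a quantitative Kuratowski-to-Hausdorff argument: for each $\varepsilon>0$, cover $S(\tau)\mathcal{B}_0$ by finitely many sets of diameter at most $\varphi(\tau)+\varepsilon$ (permissible by the noncompactness-measure bound); each such piece sits in $\mathcal{B}_0$ by positive invariance, and by chasing its forward orbit one verifies that its $\omega$-limit — which is contained in $\mathcal{A}^*=\omega(\mathcal{B}_0)$ — lies within $\varphi(\tau)+\varepsilon$ of the piece itself. Sending $\varepsilon\to 0$ gives $\mathrm{dist}(S(\tau)\mathcal{B}_0,\mathcal{A}^*)\le\varphi(\tau)$, which chained with the inclusion above produces (\ref{20-8-5-33}). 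The delicate point is keeping the bounding function intact — the usual qualitative convergence proofs for $\omega$-limit sets do not preserve the explicit rate $\varphi$ — and it is here that the positive invariance of $\mathcal{B}_0$ (rather than a generic absorbing property) and the precise form of the noncompactness-measure bound have to be used together.
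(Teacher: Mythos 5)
Your reduction of the problem to the single estimate $\mathrm{dist}(S(\tau)\mathcal{B}_0,\mathcal{A}^*)\leq\varphi(\tau)$ for $\tau\geq t_0$, via the entering time and the semigroup property, is correct and is the easy part. The genuine gap is in your choice $\mathcal{A}^*=\omega(\mathcal{B}_0)$ and in the final step asserting that the $\omega$-limit of each small piece $F_j$ of $S(\tau)\mathcal{B}_0$ ``lies within $\varphi(\tau)+\varepsilon$ of the piece itself.'' That assertion is false: $\omega(F_j)$ is built from points $S(t_n)x_n$ with $t_n\to\infty$, and these can drift arbitrarily far from $F_j$ (already a single trajectory, a piece of diameter $0$, has its $\omega$-limit at a distant equilibrium). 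A concrete counterexample to the whole strategy: take $\dot u=-u$ on $X=\mathbb{R}$ with $\mathcal{B}_0=[-1,1]$. Then $\alpha(S(t)\mathcal{B}_0)=0$ for every $t$, so the hypothesis holds with, say, $\varphi(t)=e^{-2t}$; but $\omega(\mathcal{B}_0)=\{0\}$ and $\mathrm{dist}(S(t)\mathcal{B}_0,\{0\})=e^{-t}$, which is not $O(e^{-2t})$. The $\omega$-limit set is the \emph{minimal} attracting set and may attract far more slowly than $\varphi$ decays; this is precisely the phenomenon the introduction invokes to motivate generalized $\varphi$-attractors, which are in general strictly larger than the global attractor. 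So no amount of ``chasing forward orbits'' will rescue the estimate for $\omega(\mathcal{B}_0)$.

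What the bound $\alpha(S(t)\mathcal{B}_0)\leq\varphi(t)$ actually buys is this: a set of Kuratowski measure less than $\delta$ admits a finite $(\delta+\varepsilon)$-net chosen from within the set itself (pick one point in each covering piece $F_j$), hence for each $n$ there is a finite set $K_n\subseteq S(t_0+n)\mathcal{B}_0$ with $\mathrm{dist}(S(t_0+n)\mathcal{B}_0,K_n)\leq\varphi(t_0+n)+\varepsilon_n$. The attractor must be assembled from these nets, not from $\omega(\mathcal{B}_0)$: one takes $\mathcal{A}^*$ to be the closure of the union of the forward orbits $\gamma^+(K_n)$ (forward orbits restore positive invariance; you may adjoin $\omega(\mathcal{B}_0)$, but it is not the whole set). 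Compactness of this closure again uses the hypothesis, since the tail $\bigcup_{n\geq m}\gamma^+(K_n)$ is contained in $S(t_0+m)\mathcal{B}_0$ by positive invariance and therefore has noncompactness measure at most $\varphi(t_0+m)\to 0$, while each individual $\overline{\gamma^+(K_n)}$ is compact by asymptotic compactness. The interpolation between the discrete times $t_0+n$ and general $\tau$ is what the loss of ``$1$'' in $\varphi(t-t_*(B)-1)$ absorbs. Your proposal is missing this net-based construction entirely, and without it the quantitative estimate cannot be obtained.
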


\begin{Def}
Let~$X$ be a complete metric space and~$B$ be a bounded subset of~$X$. Function~$\Psi:X\times X\rightarrow \mathds{R^{+}}$ is said to be contractive on~$B\times B$ if for any $\{y_n\}\subseteq B$ there exists a subsequence $\{y_{n_k}\}$ of $\{y_n\}$ such that
\begin{equation*}
 \lim_{k\rightarrow\infty} \lim_{l\rightarrow\infty} \Psi(y_{n_k},y_{n_l})=0.
\end{equation*}
We denote by~$\mathrm{Contr}(B)$ the set of all contractive functions on~$B\times B$.
\end{Def}

\begin{thm}\label{20-7-27-80}
Let~$\{S(t)\}_{t\geq0}$ be a dissipative dynamical system on a complete metric space~$(X,d)$ and~$\mathcal{B}_0$ be a positively invariant bounded absorbing set. Assume that there exist $\beta\in(0,1)$, positive constants~$C, T$, functions~$g_l:(\mathds{R}^{+})^{m}\rightarrow\mathds{R}^{+}\ (l=1,2)$,~$\Psi_r:X\times X\rightarrow \mathds{R^{+}}\ (r=1,2)$ and pseudometrics ~$\varrho_{T}^{i}\ (i=1,2,\ldots,m)$ on~$\mathcal{B}_0$ such that
\begin{itemize}
\item[(i)]~$g_l$~is non-decreasing with respect to each
variable,~$g_l(0,\ldots,0)=0$~and~$g_l$~is continuous at~$(0,\ldots,0)$;
\item[(ii)]~$\varrho_{T}^{i}(i=1,2,\ldots,m)$ is precompact on~$\mathcal{B}_0$, i.e., any sequence~$\{x_n\}\subseteq \mathcal{B}_0$ has a subsequence~$\{x_{n_k}\}$~which is Cauchy with
respect to~$\varrho_{T}^{i}$;
\item[(iii)]~$\Psi_{r}\in \mathrm{Contr}(\mathcal{B}_0)\ (r=1,2)$;
\item[(iv)]~for all~$y_1,y_2\in \mathcal{B}_0$, we have the inequalities
\begin{equation}\label{20-8-3-20}
\begin{split}
(d(S(T)y_1,S(T)y_2))^2\leq
(d(y_1,y_2))^2+g_1\Big(\varrho_{T}^1\big(y_1,y_2\big),\varrho_{T}^2\big(y_1,y_2\big),\ldots,\varrho_{T}^m\big(y_1,y_2\big)\Big)+\Psi_{1}\big(y_1,y_2\big)
\end{split}
\end{equation}
and
\begin{equation}\label{20-7-27-25}
\begin{split}
\left(d(S(T)y_1,S(T)y_2)\right)^2
 \leq &C\bigg[ (d(y_1,y_2))^2-(d(S(T)y_1,S(T)y_2))^2+g_1\Big(\varrho_{T}^1\big(y_1,y_2\big),\varrho_{T}^2\big(y_1,y_2\big),\ldots,\varrho_{T}^m\big(y_1,y_2\big)\Big)\\&+\Psi_{1}\big(y_1,y_2\big)\bigg]^{\beta}
+g_2\Big(\varrho_{T}^1\big(y_1,y_2\big),\varrho_{T}^2\big(y_1,y_2\big),\ldots,\varrho_{T}^m\big(y_1,y_2\big)\Big)+\Psi_{2}\big(y_1,y_2\big).
\end{split}
\end{equation}

\end{itemize}
Then  there exists~$N_0\in\mathds{N}$ such that
\begin{equation}\label{21-12-23-35}
\begin{split}
\alpha(S(t)\mathcal{B}_0)\leq2\bigg[(\frac{t}{T}-N_0-1)(\frac{1}{\beta}-1)(1+3C)^{-\frac{1}{\beta}}+\big(\alpha(\mathcal{B}_0)\big)^{\frac{2(\beta-1)}{\beta}}\bigg]^{\frac{\beta}{2(\beta-1)}}, ~\forall t\geq (N_0+1)T.
\end{split}
\end{equation}

Thus,~$(X, \{S(t)\}_{t\geq0})$ possesses a generalized polynomial attractor~$\mathcal{A}^*$ and for each bounded subset $B$ of $X$ we have
\begin{equation}\label{21-9-4-33}
\begin{split}
\mathrm{ dist}\left(S(t)B, \mathcal{A}^*\right) \leq2\bigg[(\frac{t-t_{*}(B)-1}{T}-N_0-1)(\frac{1}{\beta}-1)(1+3C)^{-\frac{1}{\beta}}+\big(\alpha(\mathcal{B}_0)\big)^{\frac{2(\beta-1)}{\beta}}\bigg]^{\frac{\beta}{2(\beta-1)}}, ~\forall t\geq (N_0+1)T+t_{*}(B)+1,
\end{split}
\end{equation}
where~$t_{*}(B)$ is the entering time of~$B$~into~$\mathcal{B}_0$.
\end{thm}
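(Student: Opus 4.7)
The plan is to reduce the problem to the discrete recursion $u_{n+1}\le(1+3C)(u_n-u_{n+1})^{\beta}$ for $u_n:=\alpha(S(nT)\mathcal{B}_0)^2$, then to iterate this recursion by a discrete ODE-type comparison to obtain the polynomial bound \eqref{21-12-23-35}, and finally to invoke Lemma~\ref{20-8-5-3} with $\varphi$ chosen to be the right-hand side of \eqref{21-12-23-35} to deduce \eqref{21-9-4-33}.

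For the recursion I would fix an arbitrary sequence $\{y_n\}\subseteq\mathcal{B}_0$ and exploit hypotheses (ii) and (iii) via a diagonal procedure: first extract a subsequence (still labeled $\{y_n\}$) that is Cauchy with respect to every $\varrho_T^i$, $i=1,\dots,m$, and then apply (iii) successively to $\Psi_1$ and $\Psi_2$ to thin further so that $\lim_k\lim_l\Psi_r(y_{n_k},y_{n_l})=0$ for $r=1,2$. Since each $\varrho_T^i(y_{n_k},y_{n_l})\to 0$ in this repeated limit and $g_l$ is continuous at the origin with $g_l(0,\dots,0)=0$ by (i), the terms $g_l(\varrho_T^1,\dots,\varrho_T^m)$ also vanish. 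Passing $\lim_k\lim_l$ through \eqref{20-7-27-25}, using that $x\mapsto x^{\beta}$ is monotone and continuous, yields
\begin{equation*}
\lim_k\lim_l d\bigl(S(T)y_{n_k},S(T)y_{n_l}\bigr)^2\le C\Bigl[\lim_k\lim_l d(y_{n_k},y_{n_l})^2-\lim_k\lim_l d\bigl(S(T)y_{n_k},S(T)y_{n_l}\bigr)^2\Bigr]^{\beta},
\end{equation*}
while the analogous argument applied to \eqref{20-8-3-20} shows that $u_n$ is non-increasing.

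Next I would convert these sequential inequalities into estimates on the Kuratowski measure. This uses the standard fact that if $\alpha(A)>\delta$ then $A$ carries a sequence of pairwise $\delta$-separated points (up to an absolute factor of $2$) together with an $\varepsilon$-approximation argument. Absorbing all resulting error terms and the factor-of-two discrepancy between the Kuratowski and Hausdorff measures of noncompactness into a single constant produces the recursion $u_{n+1}\le(1+3C)(u_n-u_{n+1})^{\beta}$. Rewriting this as $(1+3C)^{-1/\beta}u_{n+1}^{1/\beta}\le u_n-u_{n+1}$ and comparing with the ODE $\dot u=-(1+3C)^{-1/\beta}u^{1/\beta}$ (explicitly integrable since $1/\beta>1$), after accounting for a transient of length $N_0T$ one obtains
\begin{equation*}
u_n\le\bigl[(n-N_0-1)(\tfrac1\beta-1)(1+3C)^{-1/\beta}+\alpha(\mathcal{B}_0)^{2(\beta-1)/\beta}\bigr]^{\beta/(\beta-1)}.
\end{equation*}
Taking square roots, interpolating from $t=nT$ to $t\in[nT,(n+1)T]$ and absorbing the resulting factors into a prefactor $2$ (via monotonicity of $\alpha(S(\cdot)\mathcal{B}_0)$) produces \eqref{21-12-23-35}, and applying Lemma~\ref{20-8-5-3} then delivers \eqref{21-9-4-33}.

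The main obstacle is the step that converts the subsequence-based, repeated-limit inequality into a genuine recursion for $\alpha(S(nT)\mathcal{B}_0)^2$. The contractive-function framework is tailored so that the ``bad'' terms $g_l$ and $\Psi_r$ can be killed along suitably chosen subsequences, but some care is required to simultaneously almost-realize $\alpha$ on both sides of the inequality while $[\,\cdot\,]^{\beta}$ sits on the right. This is precisely the place where Chueshov's abstract-compactness argument must be upgraded to a quantitative decay, and it is responsible for the constants $1+3C$ and $2$ ultimately appearing in \eqref{21-12-23-35}.
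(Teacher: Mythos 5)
Your overall architecture (kill the $g_l$ and $\Psi_r$ terms along subsequences, obtain a difference inequality of the form $y(n-1)\le(1+3C)\bigl(y(n-1)-y(n)\bigr)^{\beta}$, integrate it by comparison with $\dot u=-c\,u^{1/\beta}$, and finish with Lemma~\ref{20-8-5-3}) matches the paper's Steps 1--3. But there is a genuine gap exactly at the step you yourself flag as the ``main obstacle'': you propose to establish the recursion \emph{directly} for $u_n=\alpha(S(nT)\mathcal{B}_0)^2$, absorbing the factor of two from the separation/covering discrepancy ``into a single constant''. This cannot be done. What the separation-plus-covering argument actually yields at a single step is (after discarding $g_l$, $\Psi_r$ and letting $\epsilon\to0$) an inequality of the shape $\tfrac14 u_{n}\le C\bigl(u_{n-1}-\tfrac14 u_{n}\bigr)^{\beta}$: the lower bound $\lim_k\lim_l d\bigl(S(T)y_{n_k},S(T)y_{n_l}\bigr)^2\ge u_n/4$ comes from a sequence whose images are pairwise separated by half the diameter scale, while the upper bound $\lim_k\lim_l d(y_{n_k},y_{n_l})^2\le u_{n-1}$ comes from a cover, so the factor $\tfrac14$ attaches only to the occurrences of $u_n$. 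Since $u_{n-1}-\tfrac14u_n=(u_{n-1}-u_n)+\tfrac34u_n$ is \emph{not} controlled by $u_{n-1}-u_n$, this does not yield $u_n\le C'(u_{n-1}-u_n)^{\beta}$; indeed any constant sequence $u_n\equiv c$ with $c\le 4C(3c/4)^{\beta}$ satisfies the inequality you can actually prove, so no decay whatsoever follows from it. The factor cannot be ``absorbed'' into the constant precisely because it sits inside the telescoping difference.

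The paper avoids this by never forming a step-by-step recursion for $\alpha$. It first combines \eqref{20-8-3-20} and \eqref{20-7-27-25} into the single pointwise inequality $q\bigl(d(S(T)y_1,S(T)y_2)^2\bigr)\le d(y_1,y_2)^2+g+\Psi$ with $q(s)=(3C)^{-1/\beta}s^{1/\beta}+s$, iterates its inverse $w=q^{-1}$ $n$ times at the level of pairs of trajectories, and only then converts to the noncompactness measure, obtaining $\alpha(S(nT)B)^2\le 4\,w^{n}\bigl(\alpha(B)^2\bigr)$. The factor $4$ thus appears exactly once, outside $w^{n}$, and survives only as the harmless prefactor $2$ in \eqref{21-12-23-35}; the clean recursion $y(n)=3C\bigl(y(n-1)-y(n)\bigr)^{\beta}$ that your ODE-type comparison needs is satisfied by the auxiliary orbit $y(n)=w^{n}(y(0))$, not by $\alpha(S(nT)\mathcal{B}_0)^2$ itself. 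To repair your argument you should iterate the pairwise inequality before introducing $\alpha$, which in turn requires the diagonal subsequence extraction to be performed simultaneously for all intermediate times $S(kT)y_{n}$, $k=0,\dots,n-1$, as in \eqref{21-12-22-13}--\eqref{21-12-23-33}.
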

\begin{proof}\textbf{Step1:} For any~$y_1,y_2\in \mathcal{B}_0$, we deduce from~$(\ref{20-8-3-20})$ and $(\ref{20-7-27-25})$ that

\begin{equation}\label{21-12-4-6}
\begin{split}
&q\Big(\big(d(S(T)y_1,S(T)y_2)\big)^2\Big)\equiv(3C)^{-\frac{1}{\beta}}\big(d(S(T)y_1,S(T)y_2)\big)^{\frac{2}{\beta}}+\big(d(S(T)y_1,S(T)y_2)\big)^{2}\\\leq &\big(d(y_1,y_2)\big)^2+g_1\Big(\varrho_{T}^1\big(y_1,y_2\big),\ldots,\varrho_{T}^m\big(y_1,y_2\big)\Big)+C^{-\frac{1}{\beta}}g_2^{\frac{1}{\beta}}\Big(\varrho_{T}^1\big(y_1,y_2\big),\ldots,\varrho_{T}^m\big(y_1,y_2\big)\Big) +\Psi_1\big(y_1,y_2\big)+C^{-\frac{1}{\beta}}\Psi_2^{\frac{1}{\beta}}\big(y_1,y_2\big)
\end{split}
\end{equation}
with~$q(s)=(3C)^{-\frac{1}{\beta}}s^{\frac{1}{\beta}}+s,\ s\geq 0$. We denote by~$w$
the inverse function of~$q$ on~$\mathds{R}^+$.

Since~$w$ is increasing,~$(\ref{21-12-4-6})$ implies that
\begin{equation}\label{21-9-4-10}
\begin{split}
\big(d(S(T)y_1,S(T)y_2)\big)^2\leq w\bigg(\big(d(y_1,y_2)\big)^2+g\Big(\varrho_{T}^1\big(y_1,y_2\big),\ldots,\varrho_{T}^m\big(y_1,y_2\big)\Big) +\Psi\big(y_1,y_2\big)
\bigg),
\end{split}
\end{equation}
where~$g\Big(\varrho_{T}^1\big(y_1,y_2\big),\ldots,\varrho_{T}^m\big(y_1,y_2\big)\Big)\equiv g_1\Big(\varrho_{T}^1\big(y_1,y_2\big),\ldots,\varrho_{T}^m\big(y_1,y_2\big)\Big)+C^{-\frac{1}{\beta}}g_2^{\frac{1}{\beta}}\Big(\varrho_{T}^1\big(y_1,y_2\big),\ldots,\varrho_{T}^m\big(y_1,y_2\big)\Big) $ and $\Psi\big(y_1,y_2\big)\equiv \Psi_{1}\big(y_1,y_2\big)+C^{-\frac{1}{\beta}}\Psi_{2}^{\frac{1}{\beta}}\big(y_1,y_2\big)$.

Iterating~$(\ref{21-9-4-10})$ we have that
\begin{equation}\label{21-12-22-7}
\begin{split}
\big(d(S(nT)y_1,S(nT)y_2)\big)^2\leq & w\bigg(\big(d(S((n-1)T)y_1,S((n-1)T)y_2)\big)^2+\Psi\big(S((n-1)T)y_1,S((n-1)T)y_2\big)\\&\\&+g\Big(\varrho_{T}^1\big(S((n-1)T)y_1,S((n-1)T)y_2\big),\ldots,\varrho_{T}^m\big(S((n-1)T)y_1,S((n-1)T)y_2\big)\Big)\bigg)\\&
\\\leq&w\bigg( w\bigg(\cdots w\bigg(w\bigg(\big(d(y_1,y_2)\big)^2+g\Big(\varrho_{T}\big(y_1,y_2\big),\ldots,\varrho_{T}\big(y_1,y_2\big)\Big)  +\Psi\big(y_1,y_2\big) \bigg)\\&+\Psi\big(S(T)y_1,S(T)y_2\big)+g\Big(\varrho_{T}^1\big(S(T)y_1,S(T)y_2\big),\ldots,\varrho_{T}^m\big(S(T)y_1,S(T)y_2\big)\Big)\bigg)+\cdots\bigg)\\&+g\Big(\varrho_{T}^1\big(S((n-1)T)y_1,S((n-1)T)y_2\big),\ldots,\varrho_{T}^m\big(S((n-1)T)y_1,S((n-1)T)y_2\big)\Big)\\&+\Psi\big(S((n-1)T)y_1,S((n-1)T)y_2\big)\bigg),~~\forall n\in\mathds{N}.
\end{split}
\end{equation}

The right-hand side of the relation above is a continuous function of $d(y_1,y_2)$ and
\begin{equation*}
 \Psi\big(S(kT)y_1,S(kT)y_2\big),~g\Big(\varrho_{T}^1\big(S(kT)y_1,S(kT)y_2\big),\ldots,\varrho_{T}^m\big(S(kT)y_1,S(kT)y_2\big)\Big)~~(k=0,\ldots,n-1).
\end{equation*}

For each ~$B\subseteq \mathcal{B}_0$ and any $\epsilon>0$, by the definition of the noncompactness measure~$\alpha$, there exist sets~$F_1,F_2,\ldots,F_\gamma$~such that
\begin{equation}\label{21-4-19-1}
  B\subseteq\cup_{j=1}^{\gamma}F_j,\ \mathrm{diam} F_j<\alpha(B)+\epsilon.
\end{equation}

For every sequence $\{y_p\}\subseteq B$, there exist~$j_{*}\in\{1,2,\ldots,\gamma\}$ and a subsequence~$\{y_{p_\iota}\}$ of $\{y_p\}$ such that
$\{y_{p_\iota}\}_{\iota=1}^{+\infty}\subseteq F_{j_*}$.

By~$(\ref{21-4-19-1})$, we have
\begin{equation}\label{21-4-19-11}
  d\big(y_{p_\iota},y_{p_\nu}\big)\leq \mathrm{diam} F_{j_*}<\alpha(B)+\epsilon, ~~\forall \iota,\nu\in \mathds{N}.
\end{equation}

Since~$\varrho_{T}^{i}(i=1,2,\ldots,m)$ is precompact on~$\mathcal{B}_0$ and~$\Psi_{r}\in \mathrm{Contr}(\mathcal{B}_0)\ (r=1,2)$, ~$\{y_{p_\iota}\}$ has a subsequence ~$\{y_{p_{\iota_{\lambda}}}\}$  such that
\begin{equation}\label{21-12-22-13}
 \lim_{\lambda,\chi\rightarrow+\infty} g\Big(\varrho_{T}^1\big(S(kT)y_{p_{\iota_{\lambda}}},S(kT)y_{p_{\iota_{\chi}}}\big),\ldots,\varrho_{T}^m\big(S(kT)y_{p_{\iota_{\lambda}}},S(kT)y_{p_{\iota_{\chi}}}\big)\Big)\bigg)=0
\end{equation}
and
\begin{equation}\label{21-12-23-33}
 \lim_{\lambda\rightarrow+\infty} \lim_{\chi\rightarrow+\infty}  \Psi\big(S(kT)y_{p_{\iota_{\lambda}}},S(kT)y_{p_{\iota_{\chi}}}\big)=0
\end{equation}
hold for~$k=0,\ldots,n-1$.

We deduce from~$(\ref{21-12-22-7})$,~$(\ref{21-4-19-11})$-$(\ref{21-12-23-33})$ that
\begin{equation}\label{21-12-22-8}
    \liminf_{p\rightarrow +\infty} \liminf_{q\rightarrow +\infty}  \big(d(S(nT)y_{p},S(nT)y_{q})\big)^2\leq\liminf_{\lambda\rightarrow+\infty} \liminf_{\chi\rightarrow+\infty}  \big(d(S(nT)y_{p_{\iota_{\lambda}}},S(nT)y_{p_{\iota_{\chi}}})\big)^2\leq  w^n\Big(\big(\alpha(B)+\epsilon\big)^2\Big),
\end{equation}
where $w^n$ denotes the n-fold composition $w\circ w \circ \cdots \circ w$.
By the arbitrariness of~$\epsilon$,~$(\ref{21-12-22-8})$ implies that
\begin{equation}\label{21-12-22-16}
    \liminf_{p\rightarrow +\infty} \liminf_{q\rightarrow +\infty}  \big(d(S(nT)y_{p},S(nT)y_{q})\big)^2\leq w^n\Big(\big(\alpha(B)\big)^2\Big),~~\forall\{y_p\}\subseteq B.
\end{equation}

\textbf{Step2:} Inequality~$(\ref{21-12-22-16})$ implies that
\begin{equation}\label{21-12-23-17}
\begin{split}
\Big(\alpha\big(S(nT)B\big)\Big)^2\leq 4w^n\Big(\big(\alpha(B)\big)^2\Big).
 \end{split}\end{equation}
 If~$(\ref{21-12-23-17})$ was false,  then there would exist~$d_0>0$ such that
\begin{equation}\label{21-9-1-18}
\begin{split}\Big(\alpha\big(S(nT)B\big)\Big)^2>d_0>  4w^n\Big(\big(\alpha(B)\big)^2\Big).
 \end{split}\end{equation}

Hence according to the definition of the noncompactness measure~$\alpha$,~$S(nT)B$  has no finite cover of diameter less than or equal to $\sqrt{d_0}$. Consequently, for any $y_1\in B$, there exists~$y_2\in B$~such that
\begin{equation*}
  d(S(nT)y_1,S(nT)y_2)>\frac{\sqrt{d_0}}{2}.
\end{equation*}
And then we can find ~$y_3\in B$~ satisfying
\begin{equation*}
  d(S(nT)y_3,S(nT)y_i)>\frac{\sqrt{d_0}}{2},\ i=1,2.
\end{equation*}
 Iterating in this way, we obtain a sequence $\{y_p\}\subseteq B$ satisfying
\begin{equation}\label{21-9-1-16}
  d(S(nT)y_p,S(nT)y_q)>\frac{\sqrt{d_0}}{2}>\sqrt{w^n\Big(\big(\alpha(B)\big)^2\Big)},\  \forall p\neq q.
\end{equation}
This is contrary to~$(\ref{21-12-22-16})$.

\textbf{Step3:}
Let $y(0)\in \mathds{R}^+$ and $y(n)=w^n\big(y(0)\big),~\forall n\in \mathds{N}$, which is  equivalent to
\begin{equation}\label{21-12-25-3}
  y(n)=w\big(y(n-1)\big),\forall n\in\mathds{N}.
\end{equation}
It is easy to check that~$w(0)=0$ and~$w(s)<s,\ s>0$. Hence $y(n)$ is  monotonically decreasing and $y(n)\rightarrow 0$ as $n\rightarrow+\infty$. Consequently, there exists $N_0\in \mathds{N}$ such that $y(n-1)-y(n)\in (0,1),~\forall n\geq N_0$.
By the definition of $w$, $(\ref{21-12-25-3})$ can be rewritten as
\begin{equation*}
  y(n)=3C[y(n-1)-y(n)]^{\beta}, ~\forall n\in \mathds{N},
\end{equation*}
which implies that
\begin{equation}\label{21-12-25-9}
  y(n-1)=y(n)+y(n-1)-y(n)<(1+3C)[y(n-1)-y(n)]^{\beta},~\forall n\geq N_0.
\end{equation}
We deduce from~$(\ref{21-12-25-9})$ that
\begin{equation}\label{21-12-25-11}
\begin{split}
  y^{1-\frac{1}{\beta}}(n)-y^{1-\frac{1}{\beta}}(n-1)=&(1-\frac{1}{\beta})\big(\theta y(n)+(1-\theta)y(n-1)\big)^{-\frac{1}{\beta}}\big(y(n)-y(n-1)\big)\\ \geq&(1-\frac{1}{\beta})y^{-\frac{1}{\beta}}(n-1)\big(y(n)-y(n-1)\big)\\
  \geq& \big(\frac{1}{\beta}-1\big)(1+3C)^{-\frac{1}{\beta}},~\forall n\geq N_0,
  \end{split}
\end{equation}
where $\theta\in(0,1)$.
It follows from~$(\ref{21-12-25-11})$ that
\begin{equation}\label{21-12-25-18}
y(n)=w^n\big(y(0)\big)\leq \bigg[(n-N_0)(\frac{1}{\beta}-1)(1+3C)^{-\frac{1}{\beta}}+y^{1-\frac{1}{\beta}}(N_0)\bigg]^{\frac{\beta}{\beta-1}}, ~\forall n\geq N_0.
\end{equation}

We deduce from $(\ref{21-12-23-17})$ and $(\ref{21-12-25-18})$ that
\begin{equation}\label{21-12-23-28}
\begin{split}
\alpha(S(nT)\mathcal{B}_0)\leq 2\bigg[(n-N_0)(\frac{1}{\beta}-1)(1+3C)^{-\frac{1}{\beta}}+\big(\alpha(\mathcal{B}_0)\big)^{\frac{2(\beta-1)}{\beta}}\bigg]^{\frac{\beta}{2(\beta-1)}}, ~\forall n\geq N_0.
\end{split}
\end{equation}
It follows from $(\ref{21-12-23-28})$ that
\begin{equation}\label{21-12-23-31}
\begin{split}
\alpha(S(t)\mathcal{B}_0) \leq\alpha(S([\frac{t}{T}]T)\mathcal{B}_0)\leq2\bigg[(\frac{t}{T}-N_0-1)(\frac{1}{\beta}-1)(1+3C)^{-\frac{1}{\beta}}+\big(\alpha(\mathcal{B}_0)\big)^{\frac{2(\beta-1)}{\beta}}\bigg]^{\frac{\beta}{2(\beta-1)}}, ~\forall t\geq (N_0+1)T.
\end{split}
\end{equation}
where $[\frac{t}{T}]$ denotes the integer part of $\frac{t}{T}$.

By Lemma~$\ref{20-8-5-3}$,~$(\ref{21-12-23-31})$ implies the existence of a polynomial attractor~$\mathcal{A}^*$ which satisfies~$(\ref{21-9-4-33})$.

\end{proof}

\section{Existence of a polynomial attractor and estimate of its attractive speed for the wave equation in the critical case}
Let~$\Omega$ be a bounded domain in~$\mathbb{R}^N(N\geq3)$ with a sufficiently smooth boundary~$\partial\Omega$. We denote the inner product and the norm on~$L^2(\Omega)$ by~$(\cdot,\cdot)$ and~$\|\cdot\|$ respectively, and the norm on~$L^p(\Omega)$ by~$\|\cdot\|_p$. Consider the initial-boundary value problem of a wave equation:
\begin{align}
\label{wave equa}&u_{tt}-\Delta u+k||u_{t}||^p u_t+f(u)
=\displaystyle\int_{\Omega}K(x,y)u_{t}(t,y)dy+h(x) \ \ \text{in}\ [0,\infty)\times\Omega,\\
\label{condition}&u|_{\partial\Omega}=0,\ \ u(x,0)=u_0(x),\ \  u_{t}(x,0)=u_{1}(x),\  x\in\Omega.
\end{align}
\begin{assumption}\label{21-8-29-8}
\begin{itemize}
  \item [(i)]~$k$ and~$p$ are positive constants,~$K\in L^2(\Omega\times\Omega)$,~$h\in L^2(\Omega)$;
  \item [(ii)]~$f\in C^1(\mathbb{R})$~satisfies
  \begin{align}
\label{growth}&|f'(s)| \leq M(|s|^{\frac{2}{N-2}}+1),\\
\label{dissipativity condition}&\liminf_{|s|\rightarrow\infty}f'(s)\equiv \mu > -\lambda_{1},
\end{align}
where~$M\geq0$ and~$\lambda_{1}$ is the first eigenvalue of the operator~$-\Delta$ equipped with Dirichlet boundary condition.
\end{itemize}
\end{assumption}
\begin{lemma}\cite{my1}\label{20-9-30-1}
Let~$T> 0$ be arbitrary. Under Assumption~$\ref{21-8-29-8}$, problem~$(\ref{wave equa})$-$(\ref{condition})$ has a unique weak solution~$u \in C([0,T];H^1_0(\Omega)) \cap C^1([0,T];L^2(\Omega))$ for every~$(u_0,u_1) \in H^1_0(\Omega)\times L^2(\Omega)$, which generates the semigroup~$S(t)(u_0,u_1) = (u(t),u_t (t))\ (t\geq0)$ on~$H^1_0(\Omega)\times L^2(\Omega) $.
Furthermore, the semigroup~$\{S(t)\}_{t\geq0}$ is dissipative, which implies the existence of a positively invariant bounded absorbing set~$\mathcal{B}_0$.
\end{lemma}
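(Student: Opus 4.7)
The plan is to establish the lemma via a Faedo--Galerkin approximation combined with energy and monotonicity arguments; the structure is classical but the critical growth of $f$ and the nonlocal character of the damping both require some care. I would first prove existence and uniqueness on an arbitrary interval $[0,T]$, and then derive a dissipation estimate that produces a bounded absorbing set.

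\textbf{Step 1 (Existence via Galerkin).} Let $\{w_j\}$ be the Dirichlet eigenfunctions of $-\Delta$ and project (\ref{wave equa})--(\ref{condition}) onto $\mathrm{span}\{w_1,\ldots,w_n\}$. Local solvability of the resulting ODE system is standard. Testing the approximant $u^n$ with $u^n_t$ produces the energy identity
\begin{equation*}
\frac{d}{dt}E(u^n,u^n_t)=-k\|u^n_t\|^{p+2}+\int_\Omega\int_\Omega K(x,y)u^n_t(y)u^n_t(x)\,dy\,dx,
\end{equation*}
where $E(u,v)=\tfrac{1}{2}\|v\|^2+\tfrac{1}{2}\|\nabla u\|^2+\int_\Omega F(u)\,dx-(h,u)$ and $F(s)=\int_0^s f(\tau)\,d\tau$. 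The assumption $\liminf_{|s|\to\infty}f'(s)>-\lambda_1$ together with Poincar\'e's inequality yields coercivity of $E$ up to an additive constant, while the antidamping integral is controlled by $\|K\|_{L^2(\Omega\times\Omega)}\|u^n_t\|^2$. Gronwall then gives uniform bounds of $u^n$ in $L^\infty(0,T;H^1_0)$ and of $u^n_t$ in $L^\infty(0,T;L^2)\cap L^{p+2}(0,T;L^2)$; Aubin--Lions and the standard weak-convergence arguments (including strong $L^2$-convergence of $u^n$ to handle $f(u^n)$ and scalar convergence of $\|u^n_t\|$ to handle the nonlocal damping) produce a weak solution in the required class.

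\textbf{Step 2 (Uniqueness --- the main obstacle).} Because $f$ has critical Sobolev growth, the direct energy estimate on the difference $w=u-\tilde u$ of two solutions is delicate. Testing with $w_t$ gives
\begin{equation*}
\tfrac{1}{2}\frac{d}{dt}\big(\|w_t\|^2+\|\nabla w\|^2\big)+k\big(\|u_t\|^pu_t-\|\tilde u_t\|^p\tilde u_t,w_t\big)=\int_\Omega\int_\Omega K(x,y)w_t(y)w_t(x)\,dy\,dx-\big(f(u)-f(\tilde u),w_t\big);
\end{equation*}
the damping difference is nonnegative by monotonicity of the map $v\mapsto\|v\|^p v$ in the appropriate scalar sense. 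The crucial term $\bigl(f(u)-f(\tilde u),w_t\bigr)$ is bounded via the mean-value theorem, the growth condition (\ref{growth}) and H\"older's inequality together with the Sobolev embedding $H^1_0\hookrightarrow L^{2N/(N-2)}$, using a priori bounds on $\|u\|_{2N/(N-2)}$ and $\|\tilde u\|_{2N/(N-2)}$ on $[0,T]$; Gronwall then closes the argument. This is the step I expect to be the main technical obstacle, and a multiplier/compensated-compactness argument in the spirit of Chueshov--Lasiecka offers a cleaner alternative if the direct bound proves too crude.

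\textbf{Step 3 (Dissipativity and absorbing set).} Once $S(t)$ is well defined on $H^1_0\times L^2$, I would introduce a perturbed Lyapunov functional $\Phi(t)=E(u(t),u_t(t))+\varepsilon\bigl(u_t(t),u(t)\bigr)$ for small $\varepsilon>0$ and verify a differential inequality of the form $\frac{d}{dt}\Phi+c\Phi\leq C$. The dissipation $-k\|u_t\|^{p+2}$ dominates $\|u_t\|^2$ after Young's inequality, the antidamping integral is then absorbed into the nonlocal damping for $\|u_t\|$ large, and the coercivity of $F$ supplied by (\ref{dissipativity condition}) controls the potential energy. This yields a ball in $H^1_0\times L^2$ that absorbs every bounded set; replacing it with (the closure of) the union of its forward orbits, intersected with a slightly larger ball, supplies the required positively invariant bounded absorbing set $\mathcal{B}_0$.
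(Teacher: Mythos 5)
The paper contains no proof of this lemma for you to be measured against: the statement is imported verbatim from \cite{my1}, and well-posedness and dissipativity are treated here as known inputs. Your outline --- Faedo--Galerkin approximation, the energy identity with $E(u,v)=\tfrac12\|v\|^2+\tfrac12\|\nabla u\|^2+\int_\Omega F(u)\,dx-(h,u)$, coercivity from (\ref{dissipativity condition}) via Poincar\'e, uniqueness by testing the difference equation with $w_t$, and a perturbed Lyapunov functional $\Phi=E+\varepsilon(u_t,u)$ for the absorbing set --- is the standard route and is, in substance, the argument of \cite{my1}. In particular, your uniqueness estimate does close at the critical exponent: the H\"older splitting $\tfrac1N+\tfrac{N-2}{2N}+\tfrac12=1$ bounds $\big(f(u)-f(\tilde u),w_t\big)$ by $C\|\nabla w\|\,\|w_t\|$ using only $H^1_0(\Omega)\hookrightarrow L^{2N/(N-2)}(\Omega)$, and monotonicity of $v\mapsto\|v\|^pv$ disposes of the damping difference, so criticality is not an obstruction to uniqueness (it only becomes one for asymptotic compactness, which is the subject of the rest of the paper).

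The one step that would fail as written is the limit passage in the damping term in Step 1. You invoke ``scalar convergence of $\|u^n_t\|$'', but the a priori bounds only give $u^n_t\overset{\ast}\rightharpoonup u_t$ in $L^{\infty}(0,T;L^2(\Omega))$, and weak-$\ast$ convergence does not yield convergence of the norms $\|u^n_t(t)\|$; Aubin--Lions gives compactness for $u^n$, not for $u^n_t$. To identify the weak limit of $\|u^n_t\|^pu^n_t$ you need the Minty--Browder argument for the monotone, hemicontinuous operator $v\mapsto\|v\|^pv$ on $L^{p+2}(0,T;L^2(\Omega))$, combined with the limsup energy inequality inherited from the approximate energy identity (the anti-damping and source terms do pass to the limit by weak continuity). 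This is exactly the ``monotonicity argument'' you announce in your preamble, so the idea is present, but the justification you actually wrote down is not correct. A minor remark on Step 3: $\|u_t\|^{p+2}$ does not dominate $\|u_t\|^2$ for small velocities; Young's inequality absorbs the anti-damping only up to an additive constant, which is precisely what the target inequality $\frac{d}{dt}\Phi+c\Phi\leq C$ tolerates, so your conclusion stands once phrased that way.
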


\begin{thm}\label{21-9-4-2}
Under Assumption~$\ref{21-8-29-8}$, the dynamical system~$(H_0^1(\Omega)\times L^2(\Omega),\{S(t)\}_{t\geq0})$\ generated by problem~$(\ref{wave equa})$-$(\ref{condition})$ possesses a generalized polynomial attractor~$\mathcal{A}^*$ such that for every bounded set~$B\subseteq X$ we have
\begin{equation}\label{21-4-24-1}
\begin{split}
\mathrm{ dist}\left(S(t)B, \mathcal{A}^*\right)\leq  2\Big\{(\alpha(\mathcal{B}_0))^{-p}+pkC_p6^{-\frac{p+2}{2}}(t-t_{*}(B)-1)\Big\}^{-\frac{1}{p}},\ \forall t> t_{*}(B)+1,
\end{split}
\end{equation}
where~$t_{*}(B)$ is the entering time of~$B$ into~$\mathcal{B}_0$.
\end{thm}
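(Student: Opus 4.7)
The plan is to apply the abstract criterion Theorem 2.4 with $X = H_0^1(\Omega)\times L^2(\Omega)$ under the energy norm, the absorbing set $\mathcal{B}_0$ given by Lemma 3.2, a horizon $T>0$ to be fixed, and exponent $\beta = 2/(p+2)$. With this choice one has $\beta/(2(\beta-1)) = -1/p$, $1/\beta - 1 = p/2$ and $2(\beta-1)/\beta = -p$, so the abstract rate collapses to the announced form after a suitable identification of the absolute constant $C$ in Theorem 2.4 with $kC_p$. For $y_i = (u_0^i, u_1^i)\in \mathcal{B}_0$ with corresponding solutions $u^i$, set $w = u^1 - u^2$; then $w$ satisfies
\begin{equation*}
w_{tt} - \Delta w + k\bigl(\|u_t^1\|^p u_t^1 - \|u_t^2\|^p u_t^2\bigr) + f(u^1) - f(u^2) = \int_\Omega K(x,y)\,w_t(t,y)\,dy,
\end{equation*}
and $d(S(t)y_1, S(t)y_2)^2 = 2E_w(t)$ with $E_w(t) = \tfrac12(\|w_t(t)\|^2 + \|\nabla w(t)\|^2)$.

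The first inequality in Theorem 2.4 follows by testing with $w_t$ on $[0,T]$. A direct expansion yields the positivity estimate
\begin{equation*}
\bigl(\|u_t^1\|^p u_t^1 - \|u_t^2\|^p u_t^2,\; w_t\bigr) \;\geq\; \tfrac12\bigl(\|u_t^1\|^p + \|u_t^2\|^p\bigr)\|w_t\|^2 \;\geq\; c_p\,\|w_t\|^{p+2},
\end{equation*}
where the second step uses $\|u_t^1\| + \|u_t^2\| \geq \|w_t\|$ and the constant $c_p$ depends only on the energy radius of $\mathcal{B}_0$; this radius is the source of the factor $6^{-(p+2)/2}$ appearing in the theorem. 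The antidamping contributes at most $\|K\|_{L^2(\Omega\times\Omega)}\|w_t\|^2$, which is absorbed for $T$ sufficiently large. Integrating $\int_0^T(f(u^1)-f(u^2), w_t)\,ds$ by parts in time rewrites it as boundary pieces at $s=0,T$ plus an interior remainder, and these are estimated by a precompact pseudometric such as $\varrho_T(y_1,y_2) = \sup_{s\in[0,T]}\|u^1(s)-u^2(s)\|_{L^q}$ with $q<2N/(N-2)$ (precompact by $H_0^1\hookrightarrow\hookrightarrow L^q$) together with a contractive function $\Psi_1$ collecting the critical-growth part. Discarding the positive damping integral delivers the first inequality of Theorem 2.4.

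For the stabilizability estimate I would invoke the $w$-multiplier: pairing the equation for $w$ with $w$ and integrating by parts in time produces, in combination with the energy identity above, a bound of the form
\begin{equation*}
T\,E_w(T) \;\leq\; \int_0^T E_w(s)\,ds + \text{lower order} \;\leq\; C\int_0^T \|w_t(s)\|^2\,ds + g_2(\varrho_T^1,\ldots,\varrho_T^m) + \Psi_2(y_1,y_2),
\end{equation*}
where the critical part of $f(u^1)-f(u^2)$ is again packaged into $g_2$ and $\Psi_2$. By H\"older's inequality $\int_0^T\|w_t\|^2\,ds \leq T^{p/(p+2)}\bigl(\int_0^T\|w_t\|^{p+2}\,ds\bigr)^{2/(p+2)}$, and the damping integral is bounded by $(kc_p)^{-1}\bigl[E_w(0)-E_w(T)+g_1+\Psi_1\bigr]$ through the first inequality. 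Combining produces the second inequality of Theorem 2.4 with exponent $\beta = 2/(p+2)$.

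The main obstacle is verifying that the pieces arising from the critical nonlinearity $|f'(s)|\leq M(|s|^{2/(N-2)}+1)$ genuinely belong to $\mathrm{Contr}(\mathcal{B}_0)$, since the critical Sobolev embedding $H_0^1\hookrightarrow L^{2N/(N-2)}$ is not compact. Following the Chueshov-Lasiecka philosophy, I would split $f = f_0 + f_1$ with $f_1$ strictly subcritical (handled entirely by precompact pseudometrics via $H_0^1\hookrightarrow\hookrightarrow L^q$ for $q<2N/(N-2)$) and $f_0$ carrying the critical growth. For the $f_0$ contribution, given any sequence $\{y_n\}\subseteq \mathcal{B}_0$, extract a subsequence along which $y_n\rightharpoonup y$ in $H_0^1\times L^2$; by the uniform energy bound and the Aubin-Lions lemma, the corresponding solutions $u^n$ converge to $u$ strongly in $C([0,T];L^q)$ and pointwise a.e.\ on $[0,T]\times\Omega$, so a Brezis-Lieb-type pass-to-the-limit forces $\liminf_{k}\liminf_l\Psi_r(y_{n_k},y_{n_l})=0$, confirming $\Psi_r\in\mathrm{Contr}(\mathcal{B}_0)$. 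Once both inequalities are in hand, Theorem 2.4 applies with $\beta = 2/(p+2)$, and substituting into the abstract decay rate while matching constants reproduces the claimed attraction estimate.
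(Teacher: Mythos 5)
Your overall strategy coincides with the paper's: apply Theorem \ref{20-7-27-80} with $\beta=\frac{2}{p+2}$, derive the two required inequalities from the energy identity and the $z$-multiplier identity, use the monotonicity of $v\mapsto\|v\|^pv$ to produce $\int_0^T\|z_t\|^2\,dt\lesssim T^{p/(p+2)}\big(\int_0^T(\|w_t\|^pw_t-\|v_t\|^pv_t,z_t)\,dt\big)^{2/(p+2)}$, and handle the critical nonlinearity by a Chueshov--Lasiecka contractive-function argument (the paper does this via the potential $F$ and weak-$*$ limits rather than a Brezis--Lieb splitting $f=f_0+f_1$, but that difference is cosmetic). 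Your exponent bookkeeping ($\frac{\beta}{2(\beta-1)}=-\frac1p$, etc.) is correct, and your intermediate monotonicity inequality $(\|u_t^1\|^pu_t^1-\|u_t^2\|^pu_t^2,w_t)\ge\frac12(\|u_t^1\|^p+\|u_t^2\|^p)\|w_t\|^2$ checks out.

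There is, however, one genuine gap: your treatment of the nonlocal anti-damping term. You claim its contribution, bounded by $\|K\|_{L^2(\Omega\times\Omega)}\|w_t\|^2$, ``is absorbed for $T$ sufficiently large.'' There is nothing available to absorb it: the damping only yields the quantity $\|w_t\|^{p+2}$, which is \emph{smaller} than $\|w_t\|^2$ when $\|w_t\|$ is small, and increasing $T$ only enlarges $\int_0^T\|w_t\|^2\,dt$. Attempting to reinsert the bound $\int_0^T\|w_t\|^2\,dt\lesssim T^{p/(p+2)}(\cdots)^{2/(p+2)}$ is circular, since the anti-damping integral already sits inside that parenthesis, and a Young-inequality absorption would leave an additive constant that destroys the contraction. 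The paper's resolution is structurally different: since $K\in L^2(\Omega\times\Omega)$ makes $\Psi$ a Hilbert--Schmidt (hence compact) operator, it introduces the norm $\|\cdot\|_V=\|\Psi(\cdot)\|+\|\mathcal{A}^{-1/2}\cdot\|$, verifies $L^2(\Omega)\hookrightarrow\hookrightarrow V$, and invokes Simon's compactness theorem to show that $\varrho_T^1(y_1,y_2)=\int_0^T\|\Psi(z_t(t))\|\,dt$ is a \emph{precompact pseudometric}, so the anti-damping term goes into $g_1,g_2$ rather than being absorbed. Without some such compactness argument for $\Psi(z_t)$ your two inequalities cannot be closed. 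A secondary, minor inaccuracy: the factor $6^{-(p+2)/2}$ does not come from the energy radius of $\mathcal{B}_0$; it emerges from the constant $(1+3C)^{-1/\beta}$ of the abstract theorem after letting the free parameter $T$ tend to $0$ (note the paper exploits the arbitrariness of $T$ at the end, whereas you propose taking $T$ large).
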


\begin{proof}
Let~$\mathcal{B}_0\subseteq H_{0}^{1}(\Omega)\times L^{2}(\Omega)$ be a positively invariant bounded absorbing set. Write~$\Psi(u_t(t,x))=\int_{\Omega}K(x,y)u_t(t,y)dy$.
Let~$w(t),v(t)$ be two weak solutions to~$(\ref{wave equa})$-$(\ref{condition})$ corresponding to initial
data~$y_1,y_2\in \mathcal{B}_0$:~$(w(t),w_t(t))\equiv S(t)y_1, (v(t),v_t(t))\equiv S(t)y_2, \ y_1,y_2\in \mathcal{B}_0$. Since~$\mathcal{B}_0$ is positively invariant, we have
\begin{equation}\label{21-9-2-52}
\|(w(t),w_t(t))\|_{ H^1_0(\Omega)\times L^2(\Omega)}\leq C,~\|(v(t),v_t(t))\|_{ H^1_0(\Omega)\times L^2(\Omega)}\leq C,\forall t>0, y_1,y_2\in \mathcal{B}_0.
\end{equation}
The difference~$z(t)=w(t)-v(t)$ satisfies
\begin{equation}\label{21-9-2-53}
z_{tt}-\Delta z +k(||w_t||^p w_t-||v_t||^p v_t)+f(w)-f(v)=\Psi(z_t).
 \end{equation}
Let~$E_z(t)=\frac{1}{2}(||z_t(t)||^2+||\nabla z(t)||^2)$ and~$T$ be an arbitrary positive constant. Multiplying~$(\ref{21-9-2-53})$ by~$z_t$~in~$L^2(\Omega)$ and integrating from~$t$ to~$T$, we obtain
 \begin{equation}\label{21-9-2-55}
\begin{split}
E_z(t)=E_z(T)+k\int_t^T(||w_t||^p w_t-||v_t||^p v_t,z_t)d\tau+\int_t^T(f(w)-f(v),z_t)d\tau-\int_t^T(\Psi(z_t),z_t)d\tau.
\end{split}
 \end{equation}

Integrating~$(\ref{21-9-2-55})$ from~$0$ to~$T$ yields
 \begin{equation}\label{21-9-2-56}
\begin{split}
TE_z(T)=\int_0^TE_z(t)dt-\int_0^T\int_t^Tk(||w_t||^p w_t-||v_t||^p v_t,z_t)d\tau dt-\int_0^T\int_t^T(f(w)-f(v),z_t)d\tau dt +\int_0^T\int_t^T(\Psi(z_t),z_t)d\tau dt.
\end{split}
 \end{equation}
Multiplying~$(\ref{21-9-2-53})$ by~$z$ in~$L^2(\Omega)$ and integrating from~$0$ to~$T$ yields
 \begin{equation}\label{21-9-2-58}
\begin{split}
\int_0^T E_z(t)dt=-\frac{1}{2}(z_t,z)|^T_0+ \int_0^T ||z_t||^2 dt- \frac{1}{2} \int_0^T(f(w)-f(v),z) dt +  \frac{1}{2} \int_0^T(\Psi(z_t),z)  dt- \frac{k}{2} \int_0^T(||w_t||^p w_t-||v_t||^p v_t,z)dt.
\end{split}
\end{equation}
Substituting~$(\ref{21-9-2-58})$ into~$(\ref{21-9-2-56})$, we have
 \begin{equation}\label{20-7-25-7}
\begin{split}
TE_z(T)=&-\frac{1}{2}(z_t,z)|^T_0+ \int_0^T ||z_t||^2 dt- \frac{1}{2} \int_0^T(f(w)-f(v),z) dt +  \frac{1}{2} \int_0^T(\Psi(z_t),z)  dt- \frac{k}{2} \int_0^T(||w_t||^p w_t-||v_t||^p v_t,z)dt\\&-\int_0^T\int_t^Tk(||w_t||^p w_t-||v_t||^p v_t,z_t)d\tau dt-\int_0^T\int_t^T(f(w)-f(v),z_t)d\tau dt +\int_0^T\int_t^T(\Psi(z_t),z_t)d\tau dt.
\end{split}
 \end{equation}
 By Lemma 4.2 in \cite{my4}, we have
 \begin{equation*}
(||w_t||^p w_t-||v_t||^p v_t,w_t-z_t)\geq C_p||w_t-v_t||^{p+2},
 \end{equation*}
which, together with the concavity of~$g(s)=s^{\frac{2}{p+2}}~(s>0)$, yields that
   \begin{equation}\label{20-7-25-14}
   \begin{split}
    \int_0^T ||z_t||^2 dt\leq &C_p^{-\frac{2}{p+2}}\int_0^T (||w_t||^p w_t-||v_t||^p v_t,w_t-v_t)^{\frac{2}{p+2}}dt
    \leq C_p^{-\frac{2}{p+2}}T^{\frac{p}{p+2}}\Big(\int_0^T(||w_t||^p w_t-||v_t||^p v_t,w_t-v_t)dt\Big)^{\frac{2}{p+2}}.
 \end{split}\end{equation}
Taking~$t=0$ in~$(\ref{21-9-2-55})$ yields
 \begin{equation}\label{20-7-25-15}
\begin{split}
&\int_0^T(||w_t||^p w_t-||v_t||^p v_t,w_t-v_t)dt=\frac{1}{k}\left(E_z(0)-E_z(T)-\int_0^T(f(w)-f(v),z_t)d\tau+\int_0^T(\Psi(z_t),z_t)d\tau\right).
\end{split}\end{equation}
We infer from~$(\ref{20-7-25-14})$ and~$(\ref{20-7-25-15})$ that
 \begin{equation}\label{20-7-25-16}
   \begin{split}
    \int_0^T ||z_t||^2 dt\leq (kC_p)^{-\frac{2}{p+2}}T^{\frac{p}{p+2}}\bigg(E_z(0)-E_z(T)-\int_0^T(f(w)-f(v),z_t)d\tau+\int_0^T(\Psi(z_t),z_t)d\tau\bigg)^{\frac{2}{p+2}}.
 \end{split}\end{equation}
 It is easy to obtain the following estimate: $-\frac{1}{2} \int_0^T(f(w)-f(v),z) dt \leq  TC\sup_{t\in[0,T]}\|z(t)\|$, $-\frac{1}{2}(z_t,z)|^T_0\leq C\sup_{t\in[0,T]}\|z(t)\|$, $\int_0^T(\Psi(z_t),z)  dt\leq TC\sup_{t\in[0,T]}\|z(t)\|$, $-\frac{k}{2} \int_0^T(||w_t||^p w_t-||v_t||^p v_t,z)dt\leq TC\sup_{t\in[0,T]}\|z(t)\|$ and $\int_0^T\int_t^T(\Psi(z_t),z_t)d\tau dt\leq TC\int_0^T\|\Psi(z_t(t))\|dt$.
Plugging these inequalities and~$(\ref{20-7-25-16})$ into~$(\ref{20-7-25-7})$, we obtain
 \begin{equation}\label{21-9-2-66}
\begin{split}
TE_z(T)\leq &C(T+1)\sup_{t\in[0,T]}\|z(t)\|+TC\int_0^T\|\Psi(z_t(t))\|dt+\left|\int_0^T\int_t^T(f(w)-f(v),z_t)d\tau dt\right|\\&+(kC_p)^{-\frac{2}{p+2}}T^{\frac{p}{p+2}}\bigg(E_z(0)-E_z(T)+\left|\int_0^T(f(w)-f(v),z_t)d\tau\right|+C\int_0^T\|\Psi(z_t(t))\|dt\bigg)^{\frac{2}{p+2}}.
\end{split}
 \end{equation}
Let~$\mathcal{A}$ denote the strictly positive operator on~$L^2(\Omega)$ defined by~$\mathcal{A}=-\triangle$ with domain~$D(\mathcal{A})=H^2(\Omega)\cap H^1_0(\Omega)$.
Let~$V$ be the completion of~$L^2(\Omega)$ with respect to the norm~$\|\cdot\|_{V}$ given by~$\|\cdot\|_V=\|\Psi(\cdot)\|+\|\mathcal{A}^{-\frac{1}{2}}\cdot\|$ and~$W$ be the completion of~$L^2(\Omega)$ with respect to the norm~$\|\cdot\|_{W}$ given by~$\|\cdot\|_W=\|\mathcal{A}^{-\frac{1}{2}}\cdot\|$.
We infer from~$(\ref{wave equa})$,~$  \|f(w(t))\|\leq C$ and~$\|\Psi(w_{t}(t))\|\leq \|K\|_{L^2(\Omega\times\Omega)}\|w_{t}(t)\|\leq C$ that
\begin{equation*}
\begin{split}
  \|\mathcal{A}^{-\frac{1}{2}}w_{tt}(t)\|\leq&\|\nabla w(t)\|+k\|w_{t}(t)\|^p\|\mathcal{A}^{-\frac{1}{2}}w_{t}(t)\|
  +\|\mathcal{A}^{-\frac{1}{2}}\big(\Psi(w_{t}(t))+h-f(w(t))\big)\| \leq C.
  \end{split}
\end{equation*}
Hence,~$ \int_0^T\|\mathcal{A}^{-\frac{1}{2}}w_{tt}(t)\|dt\leq C_{T}$.
Besides, we have~$\int_0^T\|w_{t}(t)\|dt\leq C_{T}$ and~$L^2(\Omega)\hookrightarrow\hookrightarrow V\hookrightarrow W$.
By Corollary 4 in \cite{Simon1986},~$\varrho^1_{T}(y_1,y_2)=\int_0^T\|\Psi(z_t(t))\|dt$ is precompact on~$\mathcal{B}_0$. By~Arzel\`{a}-Ascoli~Theorem,~$ C\big([0,T],H_0^1(\Omega)\big)\cap C^1\big([0,T],L^2(\Omega)\big)\hookrightarrow\hookrightarrow C\big([0,T],L^2(\Omega)\big)$. Thus~$\rho^2_{T}(y_1,y_2)=\sup_{t\in[0,T]}\|z(t)\|$ is precompact on~$\mathcal{B}_0$.

Let~$\big(u^{(n)}(t),u^{(n)}_t(t)\big)=S(t)y^{(n)}~(\{y^{(n)}\}\subseteq \mathcal{B}_0)$,~$F(\mu)=\displaystyle\int_0^{\mu}f(\tau)d\tau$ and~$s\in (0,1)$. By Alaoglu's theorem and Corollary 4 in \cite{Simon1986}, we deduce from~$H^1_0(\Omega)\hookrightarrow\hookrightarrow H^s(\Omega)\hookrightarrow L^2(\Omega)$ that there exists a subsequence of~$\big\{(u^{(n)},u^{(n)}_t)\big\}_{n=1}^{\infty}$( still denoted by~$\big\{(u^{(n)},u^{(n)}_t)\big\}_{n=1}^{\infty}$) such that
\begin{eqnarray}\label{1-13-1}
 \begin{cases}
 (u^{(n)},u^{(n)}_t)\overset{\ast}\rightharpoonup (u,u_t)\ \  \text{in} \ L^{\infty}(0,T;H^1_0(\Omega)\times L^2(\Omega)),\\
 u^{(n)}\rightarrow u\ \   \text{in }\ C([0,T];H^s(\Omega)),
 \end{cases}\  \ \text{as}\ n\rightarrow\infty.
\end{eqnarray}
 By~$(\ref{growth})$ and~$(\ref{21-9-2-52})$, for all~$t\geq0$ we have
\begin{equation}\label{1-13-10}
\begin{split}
 \left|\int_{\Omega}F(u^{(n)}(t))dx-\int_{\Omega}F(u(t))dx\right|
\leq C\|u^{(n)}(t)-u(t)\|_{H^{s}(\Omega)}.
\end{split}
\end{equation}
Combining~$(\ref{1-13-1})$ and~$(\ref{1-13-10})$ gives
\begin{equation}\label{1-13-11}
  \int_{\Omega}F(u^{(n)}(t))dx\rightrightarrows\int_{\Omega}F(u(t))dx\ \text{as}\ n\rightarrow\infty.
\end{equation}
It follows from~$H^{N}(\Omega)\hookrightarrow L^{\infty}(\Omega)$ that~$L^1(\Omega)\hookrightarrow (L^{\infty}(\Omega))^{*}\hookrightarrow H^{-N}(\Omega)$. Hence we deduce from~$(\ref{growth})$ that
\begin{equation}\label{1-13-4}
\begin{split}
  \|\mathcal{A}^{-\frac{N}{2}}f(u^{(n)}(t))-\mathcal{A}^{-\frac{N}{2}}f(u(t))\|
\leq C\|f(u^{(n)}(t))-f(u(t))\|_{1}
\leq C\|u^{(n)}(t)-u(t)\|_{H^{s}(\Omega)},~ \forall t\geq0.
\end{split}
\end{equation}
Combining~$(\ref{1-13-1})$ and~$(\ref{1-13-4})$ gives~$\sup_{t\in[0,T]}\|\mathcal{A}^{-\frac{N}{2}}\big(f(u^{(n)}(t))-f(u(t))\big)\|\longrightarrow0 \ \text{as}\ n\rightarrow\infty$.
Consequently, for each fixed~$t\in[0,T]$ and each~$\varphi\in L^1\big(0,T;H^N(\Omega)\cap H^1_0(\Omega)\big)$, we have
~$\int_t^T\big(f(u^{(n)}(\tau))-f(u(\tau)),\varphi\big)d\tau\longrightarrow 0\ \text{as}\ n\rightarrow\infty$, which implies
   \begin{equation}\label{1-13-7}
f(u^{(n)})\overset{\ast}\rightharpoonup f(u)\ \text{in}\ L^{\infty}\big(t,T;L^2(\Omega)\big)\ \text{as}\ n\rightarrow\infty.
\end{equation}
From~$(\ref{1-13-1})$ and~$(\ref{1-13-7})$, we obtain
\begin{equation}\label{1-13-8}
\begin{split}
  \lim_{n\rightarrow\infty}\lim_{m\rightarrow\infty}\int_t^T\int_{\Omega}
  f(u^{(n)}(\tau))u^{(m)}_{t}(\tau)dxd\tau
  =\int_{\Omega}F(u(T))dx-\int_{\Omega}F(u(t))dx,\\
   \lim_{n\rightarrow\infty}\lim_{m\rightarrow\infty}\int_t^T\int_{\Omega}
  f(u^{(m)}(\tau))u^{(n)}_{t}(\tau)dxd\tau =\int_{\Omega}F(u(T))dx-\int_{\Omega}F(u(t))dx.
\end{split}\end{equation}

By Lebesgue's dominated convergence theorem, we deduce from~$(\ref{1-13-11})$ and~$(\ref{1-13-8})$ that
\begin{equation}\label{1-13-12}
\begin{split}
  &\lim_{n\rightarrow\infty}\lim_{m\rightarrow\infty}\left|\int_0^T\int_t^T\big(f(u^{(n)}(\tau))-f(u^{(m)}(\tau)),u^{(n)}_{t}(\tau)-u^{(m)}_{t}(\tau)\big)d\tau dt\right|\\=&\lim_{n\rightarrow\infty}\lim_{m\rightarrow\infty}\left|\int_0^T\big(f(u^{(n)}(t))-f(u^{(m)}(t)),u^{(n)}_{t}(t)-u^{(m)}_{t}(t)\big) dt\right|=0.
\end{split}\end{equation}
By Theorem~$\ref{20-7-27-80}$, we deduce from~$(\ref{21-9-2-66})$,~$(\ref{1-13-12})$ and the precompactness of ~$\varrho^1_{T}(y_1,y_2)=\int_0^T\|\Psi(z_t(t))\|dt,~\rho^2_{T}(y_1,y_2)=\sup_{t\in[0,T]}\|z(t)\|$ that
\begin{equation}\label{20-7-28-1}
\begin{split}
\alpha(S(t)\mathcal{B}_0)\leq &2\bigg\{(\alpha(\mathcal{B}_0))^{-p}+\frac{p}{2}\big(t-(N_0+1)T\big)\Big(T^{\frac{2}{p+2}}+3\cdot(kC_p)^{-\frac{2}{p+2}}\cdot2^{\frac{p}{p+2}}\Big)^{-\frac{p+2}{2}}\bigg\}^{-\frac{1}{p}}, \ \forall t\geq (N_0+1)T.
\end{split}
\end{equation}
By  the arbitrariness of the positive constant~$T$, we have
\begin{equation*}
\begin{split}
\alpha(S(t)\mathcal{B}_0)\leq 2\Big\{(\alpha(\mathcal{B}_0))^{-p}+pkC_p6^{-\frac{p+2}{2}}t\Big\}^{-\frac{1}{p}},\ \forall t> 0.
\end{split}
\end{equation*}
Consequently, problem~$(\ref{wave equa})$-$(\ref{condition})$ possesses a generalized polynomial attractor~$\mathcal{A}^*$ which satisfies~$(\ref{21-4-24-1})$.
\end{proof}
 \section*{Acknowledgement }

The work is supported by National Natural Science Foundation of China (No.11731005; No.11801071).




\section*{References}

\end{document}